\documentclass[11pt]{article}

\usepackage[a4paper,margin=1in]{geometry}
\usepackage{amsmath,amssymb,amsthm,mathtools}
\usepackage{bm}
\usepackage{mathrsfs}
\usepackage{threeparttable}
\usepackage{enumitem}
\usepackage[expansion=false]{microtype}
\usepackage{xcolor}
\usepackage[colorlinks=true,linkcolor=blue,citecolor=blue,urlcolor=blue]{hyperref}

\numberwithin{equation}{section}

\newtheorem{theorem}{Theorem}[section]
\newtheorem{proposition}[theorem]{Proposition}
\newtheorem{lemma}[theorem]{Lemma}

\theoremstyle{definition}
\newtheorem{definition}[theorem]{Definition}
\newtheorem{assumption}[theorem]{Assumption}
\theoremstyle{remark}
\newtheorem{remark}[theorem]{Remark}

\newcommand{\R}{\mathbb R}
\newcommand{\E}{\mathbb E}
\newcommand{\Pp}{\mathbb P}
\newcommand{\dd}{\,\mathrm d}
\newcommand{\K}{\mathbb K}
\newcommand{\norm}[1]{\left\lVert#1\right\rVert}

\newcommand{\ip}[2]{\left\langle#1,#2\right\rangle}

\def\sF{{\mathscr F}}

\def\bP{{\mathbf P}}

\title{Small time asymptotics for kinetic SDEs with singular drift}
\author{
Chongyang Ren\textsuperscript{1},
Saisai Yang\textsuperscript{1},
Tusheng Zhang\textsuperscript{1,2}
}
\date{}

\begin{document}
\maketitle
\footnotetext[1]{%
School of Mathematics, University of Science and Technology of China,
Hefei, China.
Email: \texttt{rcy.math@ustc.edu.cn} (Chongyang Ren), \texttt{yangss@ustc.edu.cn} (Saisai Yang).
}

\footnotetext[2]{%
Department of Mathematics, University of Manchester,
Manchester M13 9PL, United Kingdom.
Email: \texttt{tusheng.zhang@manchester.ac.uk}.
}

\begin{abstract}
We study small-time asymptotics for kinetic stochastic differential equations with singular drifts in kinetic Kato classes. We establish a small-time large deviation principle and prove a pointwise Varadhan-type asymptotic for the associated transition density.
\end{abstract}

\section{Introduction and Framework}

In this paper, we study the small-time behavior of the transition density associated with the kinetic stochastic differential equation
\begin{equation}\label{eq:sde}
  \dd X_t=V_t\dd t,\qquad
  \dd V_t=b(t,X_t,V_t)\dd t+\sqrt{2}\dd W_t,
\end{equation}
where $(W_t)_{t\ge0}$ is a $d$-dimensional Brownian motion on the stochastic basis
$
 \mathfrak F=(\Omega,\sF,(\sF_t)_{t\ge0},\bP),
$
and $b:\R_+\times\R^{2d}\to\R^d$ is Borel measurable. 

Kinetic stochastic differential equations of the form \eqref{eq:sde},
often viewed as stochastic Hamiltonian or Langevin systems, arise naturally
in kinetic theory, nonequilibrium statistical mechanics, and stochastic
models of mechanical systems; see, for instance,
\cite{vanKampen1981,Villani2009} and the references therein.
A characteristic feature of these equations is their hypoelliptic structure:
the noise acts only on the velocity variable and is transmitted to the
position variable through the transport term. This mechanism goes back to
the classical works of Kolmogorov \cite{Kol34} and H\"ormander
\cite{Hormander1967}. 
Questions of well-posedness and heat-kernel estimates for kinetic stochastic differential equations with irregular coefficients have been studied extensively; see, among others,
\cite{Chaudru2017,CdRMPZ2023,RenZhang2025}. Such estimates provide sharp bounds on the transition density, but they generally do not reveal its precise exponential decay rate in the small-time regime. The main objective of this work is to characterize the small-time logarithmic asymptotics of the transition density.

The classical small-time behavior is already illustrated by Brownian motion.
Let $B_t=\sqrt{2}\,W_t$ start from the origin, and denote its density by
$g_t$. Then

$$
g_t(y)
=
(4\pi t)^{-d/2}
\exp\left(-\frac{|y|^2}{4t}\right),
\qquad y\in\mathbb R^d,
$$
and for every fixed $y\in\mathbb R^d$,
$$
\lim_{t\downarrow0} t\log g_t(y)
=
-\frac{|y|^2}{4}.
$$
Varadhan \cite{Varadhan1967a,Varadhan1967b} extended this logarithmic asymptotic to uniformly elliptic diffusions. By contrast, the kinetic setting exhibits an intrinsically anisotropic short-time structure. This is visible in the drift-free case: when $b=0$, \eqref{eq:sde} reduces to the Kolmogorov process

$$
\dd X_t=V_t\dd t,\qquad
\dd V_t=\sqrt2\,\dd W_t .
$$
Its transition density is explicit; see \cite{Kol34}:
\begin{equation}\label{eq:kolmogorov-kernel}
 p_0(s,\bm z;t,\bm y)
 =\left(\frac{\sqrt3}{2\pi (t-s)^2}\right)^d
 \exp\left[
 -\frac12
 \left\langle
 Q_{t-s}^{-1}\bigl(\bm y-\theta_{s,t}(\bm z)\bigr),
 \bm y-\theta_{s,t}(\bm z)
 \right\rangle
 \right],
\end{equation}
where
\begin{align}\label{eq:theta}
 \theta_{s,t}(x,v)=\bigl(x+(t-s)v,v\bigr),\qquad
 Q_t=
 \begin{pmatrix}
   \frac{2t^3}{3}I_d&t^2I_d\\
   t^2I_d&2tI_d
 \end{pmatrix}.
\end{align}
In particular, starting from the origin,
\begin{equation}\label{eq:kolmogorov-origin-density}
 p_t(x,v)
 =
 \left(\frac{\sqrt3}{2\pi t^2}\right)^d
 \exp\left(
 -\frac{3|2x-tv|^2+t^2|v|^2}{4t^3}
 \right).
\end{equation}
A direct calculation from \eqref{eq:kolmogorov-origin-density} gives, for
fixed $(x,v)\in\R^{2d}$,
\begin{equation}\label{eq:kolmogorov-fixed-endpoint-limits}
 \lim_{t\downarrow0}t\log p_t(x,v)
 =
 \begin{cases}
   -\infty, & x\ne0,\\
   -|v|^2, & x=0,
 \end{cases}
 \qquad
 \lim_{t\downarrow0}t^3\log p_t(x,v)
 =
 -3|x|^2.
\end{equation}
When $x=0$, the first limit in \eqref{eq:kolmogorov-fixed-endpoint-limits} recovers the classical diffusive behavior at speed $t^{-1}$, in agreement with the uniformly elliptic Varadhan asymptotic; see \cite{Varadhan1967a,Varadhan1967b}. The situation changes qualitatively when $x\neq0$: the quantity $t\log p_t(x,v)$ diverges to $-\infty$, and hence the $t^{-1}$ scale no longer yields a finite logarithmic asymptotic. At the faster scale $t^{-3}$, the limit becomes finite but is independent of the velocity variable. Thus the asymptotics exhibit two distinct scales: the diffusive scale $t^{-1}$ in the pure velocity direction and the faster scale $t^{-3}$ in the presence of a spatial displacement. Moreover, the latter scale captures only the leading spatial contribution. This suggests that, in order to retain nontrivial contributions from both components, one should rescale the endpoint anisotropically.

These asymptotics reflect the different short-time scales of the two components. Indeed, for the Kolmogorov process starting from the origin,
$$
X_t=\sqrt{2}\int_0^t W_r\,\dd r,
\qquad
V_t=\sqrt{2}\,W_t,
$$
so that the typical fluctuations of $V_t$ and $X_t$ are of orders $t^{1/2}$ and $t^{3/2}$, respectively. This difference also determines the appropriate endpoint scaling at the large-deviation speed $t^{-1}$. To retain nontrivial contributions from both components, the spatial displacement must shrink with time. More precisely, setting $x=tq_1$ and $v=q_2$ in \eqref{eq:kolmogorov-origin-density}, one obtains

$$
\lim_{t\downarrow0}
t\log p_t(D_tq)
=
\lim_{t\downarrow0}
t\log p_t(tq_1,q_2)
=
-\left(
3|q_1|^2
-3\langle q_1,q_2\rangle
+|q_2|^2
\right).
$$
where 
\begin{align}\label{eq:D}
D_t=
\begin{pmatrix}
tI_d&0\\
0&I_d
\end{pmatrix}.
\end{align}
Thus $D_t$ provides the natural endpoint scaling for which the position and velocity variables contribute simultaneously to the limiting action.

The preceding computation leads naturally to the question of whether the same small-time asymptotics persist in the presence of a singular drift. In the non-degenerate setting, Chen, Fang and Zhang \cite{ChenFangZhang2019} established a small-time large deviation principle and a pointwise Varadhan-type asymptotic for Brownian motion with measure-valued drift in a Kato class, showing that the singular drift does not affect the leading small-time action. The kinetic setting is more delicate, since the degeneracy of the dynamics already produces different small-time scales in the drift-free case. The purpose of the present paper is to establish the corresponding small-time large deviation principle and pointwise Varadhan-type asymptotics for kinetic stochastic differential equations with singular drifts in the kinetic Kato classes.

We now introduce the kinetic Kato classes defined in \cite{RenZhang2025}.  For $\beta\ge0$ and $\lambda>0$, set
$$
\eta_{\lambda}^{(\beta)}(t,x,v)
 :=
 t^{-\frac{\beta}{2}-2d}
 \exp\bigl(-\lambda\rho_t(x,v)\bigr),
 \qquad t>0,
$$
where
$$
\rho_t(x,v)
 :=
 \frac{|x-tv|^2}{t^3}+\frac{|v|^2}{t}.
$$
The explicit Kolmogorov kernel \eqref{eq:kolmogorov-kernel} implies that there exist constants $c_0,\lambda_0,c_1,\lambda_1>0$ such that
$$
c_0\,\eta_{\lambda_0}^{(0)}(t,x,v)
 \le p_t(x,v)
 \le c_1\,\eta_{\lambda_1}^{(0)}(t,x,v),
 \qquad t>0.
$$

Let $f:\R_+\times\R^{2d}\to\R$ be measurable, extended by zero to negative times.  For $\delta>0$, define
$$
\begin{aligned}
 K_{\lambda}^{(\beta)}(f;\delta)
 &:=
 \sup_{t\ge0}\sup_{(x,v)\in\R^{2d}}
 \int_0^\delta\int_{\R^{2d}}
 \eta_{\lambda}^{(\beta)}(r,y,w) \\
 &\qquad\qquad\times
 \bigl|f(t\pm r,x\pm y,v\pm w)\bigr|
 \,\mathrm dy\,\mathrm dw\,\mathrm dr ,
\end{aligned}
$$
where
$$
\bigl|f(t\pm r,x\pm y,v\pm w)\bigr|
 :=
 \sum_{i,j,k=0}^{1}
 \bigl|f\bigl(t+(-1)^i r,x+(-1)^j y,v+(-1)^k w\bigr)\bigr|.
$$

\begin{definition}[Kinetic Kato class]
For $\beta\ge0$, define
$$
 \K_\beta
 :=
 \left\{
 f\in L_{\mathrm{loc}}^1(\R_+\times\R^{2d})
 :
 K_{\lambda}^{(\beta)}(f;1)<\infty,\quad
 \lim_{\delta\downarrow0}K_{\lambda}^{(\beta)}(f;\delta)=0,
 \ \forall\lambda>0
 \right\}.
$$
\end{definition}

These classes contain the mixed Lebesgue regimes naturally adapted to the kinetic scaling.  For $p_x,p_v\in[1,\infty]$, write
$$
 \|f\|_{L_v^{p_v}L_x^{p_x}}
 :=
 \left(
 \int_{\R^d}
 \left(
 \int_{\R^d}|f(x,v)|^{p_x}\,\mathrm dx
 \right)^{p_v/p_x}\mathrm dv
 \right)^{1/p_v},
$$
with the usual modifications when an exponent is infinite.  By H\"older's inequality and \cite[Lemma~2.6]{RenZhang2025},
$$
K_{\lambda}^{(\beta)}(f;\delta)
 \lesssim_C
 \delta^{\frac{2-\beta-\kappa}{2}}
 \|f\|_{L^q(\R_+;L_v^{p_v}L_x^{p_x})},
 \qquad
 \kappa:=\frac2q+\frac{3d}{p_x}+\frac d{p_v}<2-\beta,
$$
for a constant $C=C(d,p_x,p_v,q,\beta,\lambda)$.  Consequently,
$$
 L^q(\R_+;L_v^{p_v}L_x^{p_x})\subset\K_\beta.
$$
Notice that the two Kato conditions appearing below are not equivalent.  For instance, near the origin
$$
 f(t)=t^{-1/2}|\log t|^{-1},\qquad 0<t<\frac12,
$$
satisfies $|f|^2\in\K_0$ but $f\notin\K_1$.

\begin{definition}\label{def:weak-solution}
Let $\mathfrak{F}:=(\Omega, \mathscr{F}, (\mathscr{F}_t)_{t\ge0}, \Pp)$ be a stochastic basis.
We call  a pair of $\sF_t$-adapted processes $(Z,W)$ on $\mathfrak{F}$, with $Z=(X,V)$, a weak solution of SDE \eqref{eq:sde} starting from point initial point $ z=(x,v)$ at time $s$ if 
\begin{enumerate}
\item[(i)] $\bP(Z_s=z)=1$; (ii) $W$ is a $d$-dimensional $\mathscr{F}_t$-Brownian motion;
\item[(iii)]  For any $t>0$, it holds that $\int_s^t|b(r,Z_r)|\dd r<\infty$ a.s. and
\begin{align*}
X_t=x+\int_s^tV_r\dd r,\qquad
 V_t=v+\int_s^tb(r,Z_r)\dd r+\sqrt2\,(W_t-W_s),\ \ a.s.
\end{align*}
\end{enumerate}
\end{definition}

\begin{assumption}\label{ass:drift}
For the path-space large-deviation result, assume
\begin{equation}\label{eq:assumptions}
 |b|\in\K_1,\qquad |b|^2\in\K_0.
\end{equation}
For the heat-kernel estimate and the pointwise small-time asymptotics, assume in addition that
\begin{equation}\label{eq:heat-assumption}
 |b|^\gamma\in\K_1
 \qquad\text{for some }\gamma>1.
\end{equation}
\end{assumption}

The assumptions above play different roles.  The condition $|b|\in\K_1$ gives weak well-posedness, while $|b|^2\in\K_0$ yields the short-time exponential integrability needed to prove exponential equivalence.  The additional condition \eqref{eq:heat-assumption} is used through the two-sided heat-kernel estimate.

We shall repeatedly use the following consequence of \cite[Theorem~1.4]{RenZhang2025}.
\begin{proposition}\label{prop:weak-wellposedness}
Assume that $|b|\in\K_1$.  Then equation~\eqref{eq:sde} admits a unique weak solution for every deterministic initial point.  If, in addition, \eqref{eq:heat-assumption} holds, then this solution admits a transition density $p_b(s,\bm u;t,\bm y)$.  Moreover, for every $T>0$, there exist constants $c_0,c_1,C_0,C_1>0$ such that, for all $0<t-s\le T$,
\begin{equation}\label{eq:heat-kernel-bounds}
\begin{aligned}
 C_0(t-s)^{-2d}
 \exp\!\left(
 -c_0\rho_{t-s}\bigl(\bm y-\theta_{s,t}(\bm u)\bigr)
 \right)
 &\le
 p_b(s,\bm u;t,\bm y) \\
 &\le
 C_1(t-s)^{-2d}
 \exp\!\left(
 -c_1\rho_{t-s}\bigl(\bm y-\theta_{s,t}(\bm u)\bigr)
 \right).
\end{aligned}
\end{equation}
\end{proposition}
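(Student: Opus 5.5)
Proposition~\ref{prop:weak-wellposedness} is stated as a consequence of \cite[Theorem~1.4]{RenZhang2025}, so the plan is to reduce to that theorem rather than to reprove well-posedness. The work consists of checking that our hypotheses match those of the cited result, translating its conclusions into the present notation, and adding a short Girsanov/Zvonkin-type argument for uniqueness in law wherever the cited statement is phrased differently.

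\emph{Weak existence and uniqueness.} For existence I would take the Kolmogorov process $Z^0=(X^0,V^0)$ started at $\bm z$ at time $s$. I would then change measure with the density
\[
\mathcal E_t=\exp\Bigl(\tfrac{1}{\sqrt2}\int_s^t b(r,Z^0_r)\,\dd W_r-\tfrac14\int_s^t|b(r,Z^0_r)|^2\dd r\Bigr).
\]
To see that $\mathcal E$ is a true martingale, I would use the kinetic Khasminskii lemma. Because $p_0\le c_1\eta^{(0)}_{\lambda_1}$, the bound
\[
\sup_{s,\bm z}\E\int_s^{s+\delta}|f(r,Z^0_r)|\,\dd r\lesssim K^{(0)}_{\lambda_1}(f;\delta)
\]
holds, and iterating it gives $\sup\E e^{\kappa\int_s^{s+\delta}|f|}<\infty$ for every $\kappa$ once $\delta$ is small. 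The catch is that this needs $|b|^2\in\K_0$, which is not assumed in the first part of the proposition. So under $|b|\in\K_1$ alone, existence must come from the cited theorem's approach. There one mollifies $b$ to $b_n$ and uses the uniform heat-kernel bounds, obtained through Duhamel's formula
\[
p_{b}=p_0+\int p_b\, b\cdot\nabla_v p_0 .
\]
The weight $t^{-1/2}$ in $\eta^{(1)}$ is exactly the size of $|\nabla_v p_0|\lesssim t^{-1/2}\eta^{(0)}$, and this is where $\K_1$ enters. The remaining steps are tightness of the approximating laws and passing to the limit in the drift term, using Krylov-type estimates controlled by $K^{(1)}(|b|;\delta)\to0$.

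For uniqueness, I would show that any weak solution satisfies the same Krylov estimate. Then a Zvonkin transform built from the resolvent $u=\int_0^\infty e^{-\lambda t}P^0_t(b\cdot\nabla_v u)\,\dd t$, which is a contraction for large $\lambda$ by the $\K_1$ smallness, removes the drift. Alternatively, one can use uniqueness of the martingale problem via the Duhamel representation of the resolvent.

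\emph{Density and two-sided bounds.} Under the additional assumption \eqref{eq:heat-assumption}, I would construct $p_b$ by the parametrix series
\[
\sum_k p_0\otimes(b\cdot\nabla_v p_0)^{\otimes k}.
\]
The upper bound follows from a convolution inequality for $\eta$-kernels: for $\lambda'<\lambda$,
\[
\int \eta_\lambda^{(1)}\,|b|\,\eta_\lambda^{(0)}\le K(|b|;\delta)\,\eta^{(0)}_{\lambda'} .
\]
With $\delta$ small, the series converges geometrically on $[0,\delta]$, and the Chapman--Kolmogorov equation extends the bound to $[0,T]$. For the lower bound, I would first obtain on-diagonal positivity for $t-s\le\delta$, where $p_b\ge p_0-\varepsilon\eta$. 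I would then run a chaining argument along the straight-line kinetic path, which yields the Gaussian-type lower bound $C_0(t-s)^{-2d}e^{-c_0\rho}$.

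\emph{Main obstacle.} I expect the main difficulty to be the lower bound and the role of $\gamma>1$. Chaining requires pointwise control on $p_b$, and H\"older with $|b|^\gamma$ is what compensates the loss $\lambda\to\lambda'$ in the convolution inequality near the diagonal. In the proposal, this step is simply imported from \cite[Theorem~1.4]{RenZhang2025}, since the time-shift $\theta_{s,t}$ and the function $\rho_{t-s}$ there match our notation.
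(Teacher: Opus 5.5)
Your proposal takes the same approach as the paper, which gives no independent argument and simply imports this proposition from \cite[Theorem~1.4]{RenZhang2025}; reducing to that theorem and checking that the hypotheses and the notation $\theta_{s,t}$, $\rho_{t-s}$ match is all that is needed. The reconstructions you sketch around the citation are not load-bearing and can be dropped, and two of them are inaccurate as written: your Zvonkin resolvent equation is homogeneous (it lacks the source term $b$, so $u\equiv0$ solves it), and a convolution bound that loses $\lambda\to\lambda'$ at every step does not by itself give geometric convergence of the parametrix series.
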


The rest of the paper is organized as follows.  Section~2 states the main results and records the endpoint action associated with the kinetic large-deviation principle.  Section~3 proves the path-space LDP through Krylov--Khasminskii estimates and exponential equivalence.  Section~4 proves the pointwise small-time asymptotics by combining the LDP, the heat-kernel bounds, and the Chapman--Kolmogorov argument.

\section{Main results}

Fix $s\ge0$ and an initial point $\bm z=(x,v)\in\mathbb R^{2d}$, and let $(X_t,V_t)_{t\ge s}$ denote the solution of \eqref{eq:sde} starting from
$\bm z$ at time $s$. For $\varepsilon>0$, we consider the anisotropically rescaled process:
\begin{equation}\label{eq:scaled-process}
Y_t^\varepsilon
:=D_\varepsilon^{-1}
\left(
(X_{s+\varepsilon t},V_{s+\varepsilon t})
-\theta_{s,s+\varepsilon t}(\bm z)
\right),
\qquad 0\le t\le1.
\end{equation}
Using the definitions of $\theta_{s,s+\varepsilon t}$ in \eqref{eq:theta} and $D_\varepsilon$ in \eqref{eq:D}, we obtain
$$
Y_t^{\varepsilon,1}
=
\frac{X_{s+\varepsilon t}-x-\varepsilon t v}{\varepsilon},
\qquad
Y_t^{\varepsilon,2}
=
V_{s+\varepsilon t}-v.
$$
For $r\in(0,1]$, $\phi=(\phi_1, \phi_2)\in C([0,r];\R^{2d})$, define the rate function $I_r$ by
\begin{equation}\label{eq:time-r-action}
 I_r(\phi)=
 \begin{cases}
 \displaystyle\frac14\int_0^r|\dot\phi_2(\tau)|^2\dd\tau,
 &\begin{array}{l}
   \phi_1\text{ is absolutely continuous},\quad
   \phi_2\in H^1([0,r];\R^d),\\
   \phi(0)=0,\quad
   \dot\phi_1(\tau)=\phi_2(\tau)\quad\text{for a.e. }\tau\in[0,r],
  \end{array}\\[5mm]
 +\infty,&\text{otherwise}.
 \end{cases}
\end{equation}

\begin{theorem}\label{thm:path-ldp}
Under the assumption \eqref{eq:assumptions}, for every fixed $s\ge0$ and
$\bm z\in\R^{2d}$, the laws of $Y^\varepsilon$ under $\Pp_{s,\bm z}$ on
$C([0,1];\R^{2d})$ satisfy an LDP with speed $1/\varepsilon$ and good
rate function $I_1$ defined by \eqref{eq:time-r-action}.  More precisely,
let
$$
 \mathcal X:=C([0,1];\R^{2d}),
 \qquad
 \nu_{s,\bm z}^\varepsilon
 :=\mathcal L_{s,\bm z}(Y^\varepsilon),
$$
where $\mathcal X$ is equipped with the uniform topology, we have
\begin{enumerate}[label=\textup{(\roman*)}]
 \item for every closed subset $C\subset\mathcal X$,
 $$
\limsup_{\varepsilon\downarrow0}
  \varepsilon\log\nu_{s,\bm z}^\varepsilon(C)
  \le -\inf_{\phi\in C_0}I_1(\phi),
$$
 where $C_0:=\{\phi\in C:\phi(0)=0\}$;

 \item for every open subset $G\subset\mathcal X$,
 $$
\liminf_{\varepsilon\downarrow0}
  \varepsilon\log\nu_{s,\bm z}^\varepsilon(G)
  \ge -\inf_{\phi\in G_0}I_1(\phi),
$$
 where $G_0:=\{\phi\in G:\phi(0)=0\}$.
\end{enumerate}
\end{theorem}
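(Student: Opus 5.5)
We will treat $Y^\varepsilon$ as a small perturbation of the drift-free rescaled Kolmogorov process and transfer the LDP through exponential equivalence. Writing out the dynamics in the new time variable, with $\tilde W^\varepsilon_t:=\varepsilon^{-1/2}(W_{s+\varepsilon t}-W_s)$ a standard Brownian motion, we get
\begin{align*}
Y^{\varepsilon,1}_t=\int_0^t Y^{\varepsilon,2}_\tau\dd\tau,\qquad
Y^{\varepsilon,2}_t=\varepsilon\int_0^t b\bigl(s+\varepsilon\tau,X_{s+\varepsilon\tau},V_{s+\varepsilon\tau}\bigr)\dd\tau+\sqrt{2\varepsilon}\,\tilde W^\varepsilon_t .
\end{align*}
Set $\Gamma:C([0,1];\R^d)\to\mathcal X$, $\Gamma(h)=(\int_0^\cdot h,\,h)$, which is continuous. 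The drift-free candidate is $\bar Y^\varepsilon:=\Gamma(\sqrt{2\varepsilon}\,\tilde W^\varepsilon)$. By Schilder's theorem, $\sqrt{2\varepsilon}\tilde W^\varepsilon$ satisfies an LDP with speed $1/\varepsilon$ and rate $\frac14\int_0^1|\dot h|^2$. The contraction principle then gives an LDP for $\bar Y^\varepsilon$ with good rate function $\inf\{\frac14\int|\dot h|^2:\Gamma(h)=\phi\}$, which is exactly $I_1$ in \eqref{eq:time-r-action}. The requirement $\phi(0)=0$ is built into $I_1$, so restricting to $C_0,G_0$ in (i)–(ii) is harmless.

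It remains to prove exponential equivalence. We have
\[
\sup_t|Y^\varepsilon_t-\bar Y^\varepsilon_t|\le 2\,\varepsilon\int_0^1|b|(s+\varepsilon\tau,Z_{s+\varepsilon\tau})\dd\tau=2\int_s^{s+\varepsilon}|b|(r,Z_r)\dd r,
\]
so we need, for every $\delta>0$,
\[
\limsup_{\varepsilon\downarrow0}\varepsilon\log\Pp_{s,\bm z}\Bigl(\int_s^{s+\varepsilon}|b|(r,Z_r)\dd r>\delta\Bigr)=-\infty .
\]
By Chebyshev's inequality this follows if $\E\exp\bigl(\frac{\lambda}{\varepsilon}\int_s^{s+\varepsilon}|b|(r,Z_r)\dd r\bigr)\le C$ for every $\lambda>0$ and all small $\varepsilon$, with $C$ independent of $\varepsilon$. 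Indeed, the probability is then at most $Ce^{-\lambda\delta/\varepsilon}$, and $\lambda$ is arbitrary.

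Using Cauchy–Schwarz, $\frac\lambda\varepsilon\int_s^{s+\varepsilon}|b|\le \lambda^2\mu+\frac{1}{4\mu\varepsilon^2}\cdot\varepsilon\int_s^{s+\varepsilon}|b|^2$. This suggests instead working directly with $|b|^2\in\K_0$ via the Krylov–Khasminskii lemma: if $\sup_{t,\bm u}\E_{t,\bm u}\int_t^{t+\varepsilon}f(r,Z_r)\dd r\le \alpha<1$, then $\E\exp(\int_s^{s+\varepsilon}f)\le(1-\alpha)^{-1}$. Apply this with $f=\frac{\lambda}{\varepsilon}|b|$, splitting $\frac\lambda\varepsilon|b|\le \frac{\lambda^2}{\varepsilon}\cdot\frac{|b|^2}{\kappa}+\frac{\kappa}{4\varepsilon}$ for a parameter $\kappa>0$. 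The constant piece contributes a factor $e^{\kappa/4}$, and the first piece has Krylov bound
\[
\frac{\lambda^2}{\kappa\varepsilon}\sup_{t,\bm u}\E_{t,\bm u}\int_t^{t+\varepsilon}|b|^2(r,Z_r)\dd r .
\]

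\textbf{The main obstacle} is to show $\sup_{t,\bm u}\E_{t,\bm u}\int_t^{t+\varepsilon}|b|^2(r,Z_r)\dd r=o(\varepsilon)$, or at least $\le c\,\varepsilon\,\omega(\varepsilon)$ with $\omega\to0$. The naive Krylov estimate only gives $K^{(0)}_\lambda(|b|^2;\varepsilon)\to0$, without the extra factor $\varepsilon$, so this route needs refinement. We will therefore avoid the crude split and use Jensen on small blocks instead. Divide $[s,s+\varepsilon]$ into $N$ blocks of length $\varepsilon/N$; by Cauchy–Schwarz,
\[
\Bigl(\int_{\text{block}}|b|\Bigr)^2\le\frac{\varepsilon}{N}\int_{\text{block}}|b|^2 .
\]
The aim is the estimate $\Pp(\int_s^{s+\varepsilon}|b|>\delta)\le \Pp(\int_s^{s+\varepsilon}|b|^2>\delta^2/\varepsilon)$, combined with Khasminskii applied to $f=\frac{M}{\varepsilon}|b|^2\cdot\varepsilon=M|b|^2$ for arbitrary $M$. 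Its Krylov constant is $M\,K^{(0)}_\lambda(|b|^2;\varepsilon)\to0$, and this uses the Gaussian upper bound on the drift-perturbed kernel coming from $|b|\in\K_1$ in \cite{RenZhang2025}, at least for the Duhamel expansion. This yields
\[
\E e^{M\int_s^{s+\varepsilon}|b|^2}\le 2
\]
for small $\varepsilon$, hence
\[
\Pp\Bigl(\int_s^{s+\varepsilon}|b|^2>\delta^2/\varepsilon\Bigr)\le 2e^{-M\delta^2/\varepsilon},
\]
and letting $M\to\infty$ gives superexponential smallness. Note that this last chain works directly with $|b|^2\in\K_0$, which is exactly the role the assumption plays.

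Finally, exponential equivalence together with the LDP for $\bar Y^\varepsilon$ yields the LDP for $\nu^\varepsilon_{s,\bm z}$ with the same good rate function $I_1$. This establishes (i) and (ii).
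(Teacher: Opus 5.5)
Your final argument is essentially the paper's proof. It proceeds by superexponential negligibility of $\int_s^{s+\varepsilon}|b|(r,Z_r)\dd r$ via Cauchy--Schwarz, Chebyshev and the Khasminskii bound for $M|b|^2$ with $M$ arbitrary (using $|b|^2\in\K_0$), followed by exponential equivalence with the drift-free process, whose LDP with rate $I_1$ comes from Schilder's theorem and the contraction principle. The intermediate $\kappa$-split and block decomposition can be deleted. The Krylov input is best cited as the conditional kinetic Krylov estimate the paper uses, rather than a Gaussian bound on the perturbed kernel, which the paper only states under the extra condition $|b|^\gamma\in\K_1$.
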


\begin{theorem}\label{thm:varadhan}
Under \eqref{eq:assumptions} and \eqref{eq:heat-assumption}, for every fixed
$s\ge0$, $\bm z\in\R^{2d}$,
and $q=(q_1,q_2)\in\R^{2d}$,
$$
\lim_{\varepsilon\downarrow0}\varepsilon\log
 p_b\!\left(s,\bm z;s+\varepsilon,
 \theta_{s,s+\varepsilon}(\bm z)+D_\varepsilon q\right)
 =-J_1(q),
$$
where
$ J_1(q)=3|q_1|^2-3\ip{q_1}{q_2}+|q_2|^2$.
\end{theorem}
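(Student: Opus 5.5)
The plan is to derive the pointwise limit from the path-space LDP of Theorem~\ref{thm:path-ldp}, applied to the endpoint, together with the two-sided heat-kernel bounds of Proposition~\ref{prop:weak-wellposedness} and a Chapman--Kolmogorov splitting. As a first step, we identify the endpoint action. By the contraction principle applied to $\phi\mapsto\phi(1)$, the laws of $Y^\varepsilon_1$ satisfy an LDP with rate
\[
J_1(q)=\inf\{I_1(\phi):\phi(1)=q\}.
\]
This is a linear-quadratic control problem: minimize $\frac14\int_0^1|u|^2$ subject to $\dot\phi_1=\phi_2$, $\dot\phi_2=u$, $\phi(0)=0$, $\phi(1)=q$. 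Its value is $\frac14\langle Q_1'^{-1}q,q\rangle$ with Gramian $Q_1'=\begin{pmatrix}1/3&1/2\\1/2&1\end{pmatrix}$, which gives $3|q_1|^2-3\langle q_1,q_2\rangle+|q_2|^2$. The optimizer is a cubic polynomial path, and $J_1$ is continuous.

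\textbf{Upper bound.} Set $\bm y_\varepsilon=\theta_{s,s+\varepsilon}(\bm z)+D_\varepsilon q$. We cannot pass from the endpoint probability to the density pointwise without some regularity, so we use Chapman--Kolmogorov at time $s+\varepsilon(1-h)$ for small $h>0$:
\[
p_b(s,\bm z;s+\varepsilon,\bm y_\varepsilon)=\E_{s,\bm z}\bigl[p_b(s+\varepsilon(1-h),Z_{s+\varepsilon(1-h)};s+\varepsilon,\bm y_\varepsilon)\bigr].
\]
We bound the inner kernel by the upper estimate in \eqref{eq:heat-kernel-bounds} with time $\varepsilon h$. In the rescaled variable $w=D_\varepsilon^{-1}(Z-\theta(\bm z))=Y^\varepsilon_{1-h}$ this is
\[
C(\varepsilon h)^{-2d}\exp\bigl(-c_1\varepsilon^{-1}\Phi_h(w,q)\bigr),
\]
where $\Phi_h$ is the Kolmogorov-type quadratic form, which after the $D_\varepsilon$ rescaling has the form $|q_1-w_1-hw_2|^2/h^3+|q_2-w_2|^2/h$. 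The prefactor is harmless since $\varepsilon\log\varepsilon^{-2d}\to0$. Varadhan's integral lemma, applied with the LDP for $Y^\varepsilon_{1-h}$ (rate $J_{1-h}$, the time-$(1-h)$ endpoint action), gives
\[
\limsup\varepsilon\log p_b\le-\inf_w\bigl[J_{1-h}(w)+c_1\Phi_h(w,q)\bigr].
\]
The function $\Phi_h$ is unbounded but continuous, and its tail is controlled by the exponential tightness of $Y^\varepsilon$, which comes from the LDP and goodness of $I_1$. As $h\downarrow0$, $c_1\Phi_h$ forces $w\to q$, so the infimum tends to $J_1(q)$. This uses lower semicontinuity and continuity of $J_r$ in $r$, which follows from the explicit formula $J_r(w)=r^{-3}\cdot(\text{quadratic})$.

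\textbf{Lower bound.} We use the same identity and restrict the expectation to $\{|Y^\varepsilon_{1-h}-w^h|<\delta\}$, where $w^h$ is the optimal path at time $1-h$. There the lower bound in \eqref{eq:heat-kernel-bounds} gives an inner kernel at least
\[
C(\varepsilon h)^{-2d}\exp\bigl(-c_0\varepsilon^{-1}\sup_{|w-w^h|<\delta}\Phi_h(w,q)\bigr).
\]
Since $w^h\to q$ along the optimal cubic path, with $|q_1-w^h_1-hw^h_2|=O(h^2)$ and $|q_2-w_2^h|=O(h)$, we have $\Phi_h(w^h,q)=O(h)$. Choosing $\delta=\delta(h)$ small (of order $h^{2}$ in the first component and $h$ in the second), the sup is $O(h)$. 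The LDP lower bound for the open ball gives $\liminf\varepsilon\log\Pp\ge-J_{1-h}(w^h)\ge -J_1(q)$, since the restriction of the optimal path has cost at most the full cost. Hence $\liminf\varepsilon\log p_b\ge-J_1(q)-O(h)$, and letting $h\downarrow0$ gives the claim.

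\textbf{Main obstacle.} The delicate point is that the heat-kernel constants $c_0\neq c_1$ are not sharp, so they must only enter through the short final interval of length $\varepsilon h$, where their contribution vanishes as $h\to0$. This requires careful matching of the $D_\varepsilon$ scaling with the $\rho_{\varepsilon h}$ scaling, namely $\rho_{\varepsilon h}(D_\varepsilon a)=\varepsilon^{-1}(|a_1-ha_2|^2/h^3+|a_2|^2/h)$, which I would verify first. One also needs the LDP for $Y^\varepsilon_{1-h}$, which follows by contraction from Theorem~\ref{thm:path-ldp} (evaluation at time $1-h$).
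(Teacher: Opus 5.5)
Your proposal is correct and follows essentially the paper's proof. It uses the same Chapman--Kolmogorov split at $s+(1-h)\varepsilon$, applies the non-sharp two-sided heat-kernel bounds only on the final segment of length $\varepsilon h$, combines the LDP for $Y^\varepsilon_{1-h}$ with Varadhan's lemma, and sends $h\downarrow0$ in the penalized infimum. Your lower bound localizes near the point $w^h$ of the optimal cubic path, whereas the paper applies Varadhan's lemma with the zero-penalty competitor $\zeta_h=(q_1-hq_2,q_2)$; this is only an unpacked version of the same step.

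One small slip: by your own identity for $\rho_{\varepsilon h}(D_\varepsilon a)$, the exact final-segment cost is $|q_1-w_1-hq_2|^2/h^3+|q_2-w_2|^2/h$, not the expression with $hw_2$. The two forms are comparable within a factor $3$ uniformly in $h$, so only the constants change and the argument survives. Also, the upper bound needs no tail or exponential-tightness condition, since $-c_1\Phi_h\le0$.
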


\begin{theorem}\label{thm:fixed-endpoint}
Assume \eqref{eq:assumptions} and \eqref{eq:heat-assumption}.  Let $s\ge0$
and $\bm z\in\R^{2d}$ be fixed.
\begin{enumerate}[label=\textup{(\roman*)}]
 \item For every fixed $q=(q_1,q_2)\in\R^{2d}$,
 \begin{equation}\label{eq:fixed-endpoint}
  \lim_{\varepsilon\downarrow0}\varepsilon^3\log
  p_b\!\left(s,\bm z;s+\varepsilon,
  \theta_{s,s+\varepsilon}(\bm z)+q\right)
  =-3|q_1|^2 .
 \end{equation}
 In particular, the limit is independent of the velocity displacement $q_2$.
 \item If $q_1=0$, the left-hand side of \eqref{eq:fixed-endpoint} vanishes,
 and the exact decay is captured at the slower speed $\varepsilon^{-1}$:
 \begin{equation}\label{eq:pure-velocity-endpoint}
  \lim_{\varepsilon\downarrow0}\varepsilon\log
  p_b\!\left(s,\bm z;s+\varepsilon,
  \theta_{s,s+\varepsilon}(\bm z)+(0,q_2)\right)
  =-|q_2|^2 .
 \end{equation}
\end{enumerate}
\end{theorem}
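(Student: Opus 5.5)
Part (ii) and the first claim of (ii) are immediate. For $q=(0,q_2)$ we have $\theta_{s,s+\varepsilon}(\bm z)+(0,q_2)=\theta_{s,s+\varepsilon}(\bm z)+D_\varepsilon(0,q_2)$, because $D_\varepsilon$ does not act on the velocity block. Theorem~\ref{thm:varadhan} with $q=(0,q_2)$ therefore gives \eqref{eq:pure-velocity-endpoint}, since $J_1(0,q_2)=|q_2|^2$. Consequently
$$\varepsilon^3\log p_b\bigl(s,\bm z;s+\varepsilon,\theta_{s,s+\varepsilon}(\bm z)+(0,q_2)\bigr)=\varepsilon^2\cdot\varepsilon\log p_b(\cdots)\to0,$$
which is the vanishing statement. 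The real work is (i) for $q_1\neq0$. Here Theorem~\ref{thm:varadhan} cannot be used directly, because writing $q=D_\varepsilon\tilde q$ with $\tilde q=(q_1/\varepsilon,q_2)$ produces an unbounded endpoint. Nevertheless, the formal value $\varepsilon^{-1}J_1(\tilde q)=3|q_1|^2\varepsilon^{-3}+O(\varepsilon^{-2})$ already shows which answer to expect.

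For (i) I will prove a second path-space LDP at speed $\varepsilon^{-3}$, adapted to an order-one spatial displacement, and then repeat the pointwise argument of Section~4 at that speed. Set
$$\tilde Y^{\varepsilon,1}_t=X_{s+\varepsilon t}-x-\varepsilon tv,\qquad \tilde Y^{\varepsilon,2}_t=\varepsilon\,(V_{s+\varepsilon t}-v),\qquad 0\le t\le1,$$
so that $\dot{\tilde Y}^{\varepsilon,1}=\tilde Y^{\varepsilon,2}$. The velocity component satisfies
$$\tilde Y^{\varepsilon,2}_t=\varepsilon^2\int_0^t b(s+\varepsilon r,\cdot)\dd r+\sqrt2\,\varepsilon^{3/2}B_t$$
for a standard Brownian motion $B$. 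In the drift-free case, Schilder's theorem plus the contraction principle gives an LDP at speed $\varepsilon^{-3}$ with rate function $I_1$ from \eqref{eq:time-r-action}. I will then show that the drift term is exponentially negligible at this speed. To do so I will reuse the Krylov--Khasminskii estimates of Section~3, which are built on $|b|\in\K_1$ and $|b|^2\in\K_0$. The point to check is that $\varepsilon^2\int_0^1|b(s+\varepsilon r,Z)|\dd r=\varepsilon\int_s^{s+\varepsilon}|b|\dd r$ exceeds $\delta$ only with probability $\exp(-M\varepsilon^{-3})$ for every $M$. The Kato smallness $K(|b|;\varepsilon)\to0$ combined with Khasminskii's lemma gives $\E\exp\bigl(\lambda\int_s^{s+\varepsilon}|b|\bigr)\le 2$ for $\lambda\lesssim$ an unbounded function of $\varepsilon$. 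The bound is then available if one takes $\lambda=M\delta^{-1}\varepsilon^{-2}$. This is the main obstacle: the Kato decay rate may be too slow to allow $\lambda\sim\varepsilon^{-2}$. If it fails, I will instead pass to the Girsanov density and use $|b|^2\in\K_0$, where the relevant exponent is $\varepsilon^{-3}\cdot\varepsilon^{4}\int_0^1|b|^2\,\dd r\cdot\varepsilon^{-3}$, which I expect to be more favourable. As a last resort I will make the argument local, using stopping times when $|\tilde Y^{\varepsilon,2}|$ exits a large ball.

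Given this LDP, the endpoint action is the minimum of $\frac14\int_0^1|\dot\phi_2|^2$ subject to $\phi(0)=0$, $\phi_1(1)=q_1$, and $\phi_2(1)=\varepsilon q_2\to0$, i.e.\ effectively $\phi_2(1)=0$. Taking $\phi_2(\tau)=6q_1\tau(1-\tau)$ gives the value $3|q_1|^2$, which matches \eqref{eq:kolmogorov-fixed-endpoint-limits}. I will treat the velocity endpoint as free only up to $O(\varepsilon)$, and continuity of the action in the endpoint absorbs this error.

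To pass to the density I will follow the Chapman--Kolmogorov scheme of Section~4.

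\emph{Upper bound.} Split at time $s+\varepsilon(1-h)$ and use the upper heat-kernel bound in Proposition~\ref{prop:weak-wellposedness} on the last interval, of length $\varepsilon h$. Because the spatial mismatch is not rescaled, this bound gives decay $\exp(-c_1|\text{mismatch}|^2/(\varepsilon h)^3)$, which is of the same order as the target rate. So I will restrict the intermediate point to a small neighbourhood, where the LDP closed-set bound applies, and show the complement costs at least $c_1\eta^2/(\varepsilon h)^3$. Sending $\varepsilon\to0$, then the neighbourhood to zero, then $h\to0$ recovers $-3|q_1|^2$.

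\emph{Lower bound.} Use the lower bound in \eqref{eq:heat-kernel-bounds} on a ball of radius $\eta$ around the optimal intermediate point at time $s+\varepsilon(1-h)$, together with the LDP open-set bound for that ball. Polynomial prefactors disappear after multiplying by $\varepsilon^3$. Sending $\eta\to0$ and then $h\to0$ completes the proof of \eqref{eq:fixed-endpoint}.
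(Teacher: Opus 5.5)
Your architecture for (i) matches the paper's: an LDP for $\tilde Y^\varepsilon$ at speed $\varepsilon^{-3}$, then the Chapman--Kolmogorov split with the two-sided heat-kernel bounds on a short last leg. Your treatment of (ii) is exactly the paper's. However, the step you flag as the main obstacle, superexponential negligibility of the drift at speed $\varepsilon^{-3}$, is never established, so there is a genuine gap.

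Your primary route cannot work, and the reason is structural rather than slow Kato decay. Khasminskii's series converges only when $\lambda M_{|b|}(\varepsilon)<1$. Since $\eta^{(0)}_\lambda(r,\cdot)$ has total mass independent of $r$, $M_{|b|}(\varepsilon)=C_{T,b}K^{(0)}_{\lambda_*}(|b|;\varepsilon)$ is of exact order $\varepsilon$ already when $|b|$ is bounded and bounded away from zero. So $\lambda$ can be at most of order $\varepsilon^{-1}$, and Chebyshev gives at best $\exp(-c\varepsilon^{-2})$, never $\exp(-M\varepsilon^{-3})$.

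The Girsanov fallback is only announced. Your displayed exponent seems to carry a spurious extra factor $\varepsilon^{-3}$; the actual Girsanov exponent is $\frac14\int_s^{s+\varepsilon}|b|^2$, which is $O(1)$. The route could be made to work via H\"older's inequality with a large exponent $p$, exponential moments of $\int_s^{s+\varepsilon}|b|^2$ under both the free and the perturbed law, and then $p\to\infty$. None of this is in the proposal. Spatial localization by stopping times does nothing for a drift that is singular in all variables.

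The missing idea is the elementary Cauchy--Schwarz reduction the paper already uses in Section~\ref{sec:proof-path-ldp}. From $(\int_s^{s+\varepsilon}|b|)^2\le\varepsilon\int_s^{s+\varepsilon}|b|^2$ you get
\[
 \Pp_{s,\bm z}\Bigl(\varepsilon\int_s^{s+\varepsilon}|b(r,Z_r)|\dd r>\rho\Bigr)
 \le\Pp_{s,\bm z}\Bigl(\int_s^{s+\varepsilon}|b(r,Z_r)|^2\dd r>\frac{\rho^2}{\varepsilon^{3}}\Bigr)
 \le 2\exp\Bigl(-\frac{a\rho^2}{\varepsilon^{3}}\Bigr).
\]
The last step is Proposition~\ref{prop:khasminskii} with $f=|b|^2\in\K_0$ and a \emph{fixed} but arbitrary $a>0$. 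The factor $\varepsilon^{-3}$ comes from the threshold, not from an $\varepsilon$-dependent exponential moment. This gives exponential equivalence of $\tilde Y^\varepsilon$ with $\varepsilon^{3/2}(\sqrt2\int_0^\cdot B,\sqrt2B)$, hence the speed-$\varepsilon^{-3}$ LDP.

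There is also a fixable problem in your density upper bound: the order of limits you state fails. Off a ball of radius $\eta$ around $(q_1,0)$, the last-leg cost at speed $\varepsilon^{-3}$ is only about $c_1\eta^2/h$, because the velocity mismatch is weighted by $1/h$, not $1/h^3$. This tends to $0$ if $\eta\downarrow0$ at fixed $h$, leaving only $\limsup\le0$. You must first let $h\downarrow0$ at fixed $\eta$, so that the complement cost blows up while $\inf_{\bar B_\eta}J_{1-h}\to\inf_{\bar B_\eta}J_1$, and only then let $\eta\downarrow0$. Alternatively, do as the paper does: at a fixed splitting parameter, apply Varadhan's integral lemma to the continuous, bounded-above functional $-c\,\Psi_h(\bar q,\cdot)$ from \eqref{eq:terminal-cost}, evaluated at the $O(\varepsilon)$-shifted endpoint variable. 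Then use the variational limit \eqref{eq:penalized-action-limit}. Your lower-bound ordering is fine.
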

\begin{remark}
Theorem~\ref{thm:fixed-endpoint} reveals a hierarchy of exponential scales inherent in the kinetic dynamics.  For a fixed endpoint displacement $q=(q_1,q_2)$ with $q_1\neq0$, the leading small-time asymptotic occurs at speed $\varepsilon^{-3}$ and depends only on the spatial component $q_1$.  The velocity displacement $q_2$ is therefore invisible at this scale.  When $q_1=0$, the $\varepsilon^{-3}$ contribution vanishes, and the next nontrivial asymptotic appears at the slower speed $\varepsilon^{-1}$, yielding the velocity cost $|q_2|^2$.

This hierarchy reflects the degenerate nature of the kinetic system: the velocity component is directly forced by the Brownian noise, whereas the spatial component is obtained by integrating the velocity.  Accordingly, fixed spatial and velocity displacements contribute at different exponential orders.  In particular, part~(ii) identifies the next-order asymptotic precisely in the regime where the leading spatial cost in part~(i) vanishes.
\end{remark}

\section{Proof of Theorem~\ref{thm:path-ldp}}
\label{sec:proof-path-ldp}

The following conditional kinetic Krylov estimate is
\cite[Lemma~2.11]{HZZ2026} adapted in the present setting.

\begin{lemma}
Let $Z=(X,V)$ be a weak solution of \eqref{eq:sde} under
\eqref{eq:assumptions}.  For every $T>0$, there exist constants
$\lambda_*\in(0,1)$ and $C_{T,b}>0$ such that, for every non-negative
$g\in\K_0$ and $0\le r\le t\le T$,
\begin{equation}\label{eq:krylov-input}
 \E\left[\left.\int_r^t g(\tau,Z_\tau)\dd\tau
 \right|\mathcal F_r\right]
 \le C_{T,b}K_{\lambda_*}^{(0)}(g;t-r)
 \qquad\text{a.s.}
\end{equation}
\end{lemma}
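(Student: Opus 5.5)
We prove the conditional Krylov estimate \eqref{eq:krylov-input} by first treating the drift-free Kolmogorov process and then passing to general $b$ through Girsanov and a Khasminskii-type bootstrap. The key input is a heat-kernel bound for the solution started at a deterministic point, and the Markov property.

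\emph{Step 1 (reduction to a deterministic start).} Weak uniqueness from Proposition~\ref{prop:weak-wellposedness} applies for every deterministic initial point and time. Hence the regular conditional law of $(Z_\tau)_{\tau\ge r}$ given $\mathcal F_r$ is $\Pp_{r,Z_r}$, at least after realizing the solution on a canonical space, where uniqueness gives the Markov property. It therefore suffices to show
$$
\sup_{\bm u\in\R^{2d}}\E_{r,\bm u}\int_r^t g(\tau,Z_\tau)\dd\tau\le C_{T,b}K_{\lambda_*}^{(0)}(g;t-r).
$$

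\emph{Step 2 (drift-free case).} For $b=0$, the kernel \eqref{eq:kolmogorov-kernel} is bounded by $c_1\eta^{(0)}_{\lambda_1}(\tau-r,\bm y-\theta_{r,\tau}(\bm u))$. The shift $\theta_{r,\tau}$ does not match the variables used in $K$, so we change variables. We write $y-x-(\tau-r)v=y'$ and $w-v=w'$, and note that $\rho_t(y'+tw',w')$ is comparable to $\rho$-type quantities after adjusting $\lambda$, since $|y'|^2/t^3+|w'|^2/t$ and $\rho_t$ are equivalent quadratic forms up to constants. This gives the estimate with $K^{(0)}_{\lambda}$ for some small $\lambda$. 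The sign sum in the definition of $K$ absorbs the orientation of the displacements.

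\emph{Step 3 (general drift).} We avoid using the heat-kernel bound \eqref{eq:heat-kernel-bounds}, which needs \eqref{eq:heat-assumption}, and use Girsanov instead. The law $\Pp_{r,\bm u}$ has density $\mathcal E_t$ with respect to the Kolmogorov law $\Pp^0_{r,\bm u}$, with exponent $\frac1{\sqrt2}\int b\,\dd W-\frac14\int|b|^2$. Cauchy--Schwarz gives
$$
\E_{r,\bm u}\int_r^t g\le \Bigl(\E^0\mathcal E_t^2\Bigr)^{1/2}\Bigl(\E^0\Bigl(\int_r^t g\Bigr)^2\Bigr)^{1/2}.
$$
We handle each factor separately.
\begin{itemize}
\item The first factor is bounded by $\E^0\exp(c\int_r^t|b|^2)$, up to a square root. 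Since $|b|^2\in\K_0$, Khasminskii's lemma applied to the drift-free estimate of Step 2 gives a bound uniform in $\bm u$ and $t-r\le T$. One first takes subintervals with $K(|b|^2;\delta)$ small and then chains them by the Markov property.
\item For the second factor, we expand $(\int g)^2=2\int\!\!\int_{\tau<\sigma}g g$ and condition. This gives at most $2\|\sup_{\bm u}\E^0\int g\|_\infty^2\lesssim K(g;t-r)^2$.
\end{itemize}
Combining the two bounds yields \eqref{eq:krylov-input}, with $\lambda_*$ the minimum of the constants produced in Step 2.

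The main obstacle is Step 3, and specifically justifying Girsanov when $b$ is only integrable along paths. We would localize with stopping times $\tau_n=\inf\{\tau:\int_r^\tau|b|^2\ge n\}$, apply the argument up to $\tau_n$, and let $n\to\infty$ using the uniform exponential bound. The variable-change comparability in Step 2 is the other delicate point. If the Cauchy--Schwarz route gives only $K^{1/2}$-type bounds, we can alternatively apply Khasminskii to $g$ directly, since $g\ge0$ and the bound is linear in $K(g)$.
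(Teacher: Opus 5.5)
Your gap is in Step~2, and it is not the quadratic-form issue you flag.

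The free density from $\bm u=(x,v)$ at time $r$ is centred at $\theta_{r,\tau}(\bm u)=(x+(\tau-r)v,v)$. After your substitution, $g$ is therefore evaluated at $(\tau,\,x+(\tau-r)v+y',\,v+w')$: the base point in the position slot moves linearly in $\tau$. The functional $K^{(0)}_\lambda$ as defined in the paper integrates $|g(t\pm r,x\pm y,v\pm w)|$ against a kernel centred at a \emph{fixed} $(x,v)$ for all $r\in[0,\delta]$. A supremum over $(x,v)$ cannot absorb a time-dependent shift.

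Comparability of quadratic forms only removes the harmless $-tw$ inside $\rho_t$. What you would actually need is $\rho_s(y',w')\gtrsim\rho_s(y'+sv,w')$ up to constants, and at $y'=w'=0$ the right side equals $|v|^2/s\to\infty$.

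No better change of variables can help, because with $K$ as printed the drift-free estimate is false. Take $b=0$, $v_0\neq0$, $\alpha\in(2/3,1)$ and $g(\tau,x,v)=|x-\tau v_0|^{-\alpha}\mathbf 1_{\{0\le\tau\le2,\ |x-\tau v_0|\le1\}}$.
\begin{itemize}
\item Since $\int\eta^{(0)}_\lambda(r,y,w)\dd w=c_\lambda r^{-3d/2}e^{-\lambda|y|^2/(2r^3)}$ and $\int r^{-3d/2}e^{-\lambda|y|^2/(2r^3)}|z+y|^{-\alpha}\dd y\le C|z|^{-\alpha}$, each term of $K^{(0)}_\lambda(g;\delta)$ is at most $C\sup_{a\in\R^d}\int_0^\delta|a\pm rv_0|^{-\alpha}\dd r\le C|v_0|^{-\alpha}\delta^{1-\alpha}$. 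Hence $g\in\K_0$.
\item For the Kolmogorov process started at $(0,v_0)$, however, $X_\tau-\tau v_0=\sqrt2\int_0^\tau W_u\dd u$ has size $\tau^{3/2}$. Hence $\E\int_0^\delta g(\tau,Z_\tau)\dd\tau\ge c\int_0^\delta\tau^{-3\alpha/2}\dd\tau=\infty$.
\end{itemize}
Intuitively, a singularity moving with velocity $v_0$ stays inside the fixed spatial window of width $r^{3/2}$ only for a time of order $r^{3/2}/|v_0|$, while a particle launched with velocity $v_0$ rides on it.

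So Step~2 needs a Kato functional centred along the free flow, and so does the first bullet of Step~3, which applies Step~2 to $|b|^2$. Concretely, replace $|g(t\pm r,x\pm y,v\pm w)|$ by $|g(t\pm r,\theta_{t,t\pm r}(x,v)\pm(y,w))|$. Only in that form can the kinetic Kato class make \eqref{eq:krylov-input} hold at all. The paper gives no argument here and simply imports \cite[Lemma~2.11]{HZZ2026}. With the corrected functional, Step~2 is just the Gaussian upper bound for $p_0$.

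With that correction the rest of your argument is sound, and it is a genuinely self-contained alternative to the citation.
\begin{itemize}
\item Step~1 is the standard consequence of uniqueness in law.
\item In Step~3, $\E^0\mathcal E^2\le\bigl(\E^0\exp(3\int_r^t|b|^2\dd\tau)\bigr)^{1/2}$. Khasminskii on intervals of length $\delta_0$, plus Markov chaining, bounds this uniformly in $\bm u$ and $t-r\le T$. It also gives Novikov's condition on short intervals, so the stopping-time localisation is unnecessary.
\item The second factor is linear in $K^{(0)}_\lambda(g;t-r)$ by the $n=2$ step of the iteration, so the feared $K^{1/2}$ loss does not occur.
\end{itemize}
This route uses only $|b|\in\K_1$ and $|b|^2\in\K_0$. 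The first is needed for uniqueness, which also identifies the Girsanov law with $\Pp_{r,\bm u}$. It never uses the heat-kernel bounds that require $|b|^\gamma\in\K_1$.

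Your fallback of applying Khasminskii to $g$ directly would be circular. Khasminskii under $\Pp$ takes as input exactly the first-moment bound you are trying to prove.
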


\begin{proposition}
\label{prop:khasminskii}
Let $Z=(X,V)$ be a weak solution of \eqref{eq:sde} under
\eqref{eq:assumptions}, and let $f\in\K_0$ be non-negative.  For every
$T>0$, $a>0$, and $\delta\in(0,1)$, there exists
$\delta_0=\delta_0(a,\delta,T,b,f)>0$ such that for all $0\le s\le t\le T$ with $t-s\le \delta_0$
\begin{equation}\label{eq:khasminskii-unconditional}
 \E_{s,\bm z}\exp\left(a\int_s^t f(r,Z_r)\dd r\right)
 \le\frac1{1-\delta}
\qquad\text{uniformly in }\bm z\in\R^{2d}.
\end{equation}
\end{proposition}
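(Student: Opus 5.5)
The plan is the classical Khasminskii argument, fed by the conditional Krylov estimate \eqref{eq:krylov-input}. Fix $T>0$, $a>0$, $\delta\in(0,1)$, and write $A_{r,t}:=\int_r^t f(\tau,Z_\tau)\dd\tau$ for $s\le r\le t$. Applying \eqref{eq:krylov-input} to the non-negative function $g=af\in\K_0$ gives, for all $s\le r\le t\le T$,
\begin{equation*}
 \E\bigl[aA_{r,t}\,\big|\,\mathcal F_r\bigr]\le a\,C_{T,b}\,K^{(0)}_{\lambda_*}(f;t-r)\qquad\text{a.s.}
\end{equation*}
The constant $C_{T,b}$ and the exponent $\lambda_*$ come from the lemma and do not depend on the starting point $\bm z$ or on the starting time. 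The right-hand side is therefore a deterministic bound, uniform in $\bm z$. Since $f\in\K_0$, we have $K^{(0)}_{\lambda}(f;\delta')\to0$ as $\delta'\downarrow0$ for every $\lambda>0$, in particular for $\lambda=\lambda_*$. The quantity $K^{(0)}_{\lambda_*}(f;\cdot)$ is also monotone in its argument. So we may choose $\delta_0>0$ with
\begin{equation*}
 \kappa:=a\,C_{T,b}\,K^{(0)}_{\lambda_*}(f;\delta_0)\le\delta .
\end{equation*}
This $\delta_0$ depends only on $a,\delta,T,b,f$.

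Next comes the Khasminskii iteration. Fix $s\le t\le T$ with $t-s\le\delta_0$, and expand the exponential,
\begin{equation*}
 \E_{s,\bm z}e^{aA_{s,t}}=\sum_{n\ge0}\frac{a^n}{n!}\E_{s,\bm z}A_{s,t}^n .
\end{equation*}
By symmetry of the integrand, the $n$-th moment is a time-ordered integral:
\begin{equation*}
 \frac{1}{n!}\E A_{s,t}^n=\E\int_{s<r_1<\dots<r_n<t}\prod_{i=1}^n f(r_i,Z_{r_i})\dd r_1\cdots\dd r_n .
\end{equation*}
We now integrate out the last variable $r_n$ first, conditioning on $\mathcal F_{r_{n-1}}$. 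The innermost integral is $\int_{r_{n-1}}^t f(r_n,Z_{r_n})\dd r_n = A_{r_{n-1},t}$. The remaining factors are $\mathcal F_{r_{n-1}}$-measurable and non-negative. Hence, by Tonelli and the tower property together with the conditional bound above, the innermost integral contributes at most $\kappa/a$. Iterating this $n$ times gives
\begin{equation*}
 \frac{a^n}{n!}\E A_{s,t}^n\le\kappa^n\le\delta^n .
\end{equation*}
Summing the geometric series yields $\E_{s,\bm z}e^{aA_{s,t}}\le (1-\delta)^{-1}$, uniformly in $\bm z$. This is exactly \eqref{eq:khasminskii-unconditional}.

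The only delicate point is the conditioning step inside the time-ordered integral. It needs a measurable, Fubini-compatible version of the conditional expectation, and it requires the Krylov bound to hold simultaneously for all $r$ a.s. rather than just for each fixed $r$. I would handle this in either of two ways. One option is to first treat bounded $f$ (truncating $f\wedge N$, which stays in $\K_0$ with smaller Kato norm), pass to continuous or step approximations in $r$, and then let $N\to\infty$ by monotone convergence. The other is the standard identity
\begin{equation*}
 A_{s,t}^n=n!\int_{s<r_1<\dots<r_n<t}\cdots
\end{equation*}
written as
\begin{equation*}
 A_{s,t}^n=n\int_s^t f(r,Z_r)\,A_{r,t}^{n-1}\dd r ,
\end{equation*}
followed by induction on the conditional bound $\E[A_{r,t}^{k}\mid\mathcal F_r]\le k!\,(\kappa/a)^k$. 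The induction route avoids multi-time conditioning entirely: each step only uses conditioning at a single time $r$, applied inside a $\dd r$ integral to non-negative integrands.
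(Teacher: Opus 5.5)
Your proposal is correct and is essentially the paper's proof. The paper uses exactly your second (induction) route: it sets $M_f(h)=C_{T,b}K^{(0)}_{\lambda_*}(f;h)$ and proves $\E[A_{r,t}^n\mid\mathcal F_r]\le n!\,M_f(t-r)^n$ from $A_{r,t}^n=n\int_r^t f(u,Z_u)A_{u,t}^{n-1}\dd u$ and the monotonicity of $M_f$, then chooses $\delta_0$ with $aM_f(\delta_0)\le\delta$ and sums the geometric series. Your time-ordered version is the same iteration, and after Tonelli it only needs conditioning at a single fixed time for each fixed tuple $r_1<\dots<r_{n-1}$, so the measurability issue you flag is milder than you suggest.
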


\begin{proof}
Set
$$
 M_f(h):=C_{T,b}K_{\lambda_*}^{(0)}(f;h),
 \qquad 0\le h\le T,
$$
where $C_{T,b}$ is the constant appeared in \eqref{eq:krylov-input}.
The map $h\mapsto M_f(h)$ is non-decreasing, and
\eqref{eq:krylov-input} reads
\begin{equation}\label{eq:khasminskii-first-moment}
 \E\left[\left.\int_r^t f(u,Z_u)\dd u\right|\mathcal F_r\right]
 \le M_f(t-r),
 \qquad 0\le r\le t\le T.
\end{equation}
We claim that, for every $n\in\mathbb N$,
\begin{equation}\label{eq:khasminskii-conditional-moments}
 \E\left[\left.
  \left(\int_r^t f(u,Z_u)\dd u\right)^n
  \right|\mathcal F_r\right]
 \le n!\,M_f(t-r)^n
 \qquad\text{a.s.}
\end{equation}
This is the standard Khasminskii iteration used in the proof of
\cite[Lemma~2.11]{HZZ2026}; we include the details for completeness.  The case $n=1$ is
\eqref{eq:khasminskii-first-moment}.  Suppose that
\eqref{eq:khasminskii-conditional-moments} holds with $n-1$ in place
of $n$.  Elementary calculus  gives
$$
 \left(\int_r^t f(u,Z_u)\dd u\right)^n
 =n\int_r^t f(u,Z_u)
       \left(\int_u^t f(v,Z_v)\dd v\right)^{n-1}\dd u.
$$
Conditioning first on $\mathcal F_u$ and then on $\mathcal F_r$, and
using the induction hypothesis, we obtain
\begin{align*}
 &\E\left[\left.
   \left(\int_r^t f(u,Z_u)\dd u\right)^n
   \right|\mathcal F_r\right]                                      \\
 &\quad=n\,\E\left[\left.
   \int_r^t f(u,Z_u)
   \E\left[\left.
    \left(\int_u^t f(v,Z_v)\dd v\right)^{n-1}
    \right|\mathcal F_u\right]\dd u
   \right|\mathcal F_r\right]                                     \\
 &\quad\le n!\,M_f(t-r)^{n-1}
   \E\left[\left.\int_r^t f(u,Z_u)\dd u
   \right|\mathcal F_r\right]                                     \\
 &\quad\le n!\,M_f(t-r)^n,
\end{align*}
where we used $M_f(t-u)\le M_f(t-r)$ in the first inequality and
\eqref{eq:khasminskii-first-moment} in the last.  This proves the claim.

Because $f\in\K_0$, we have $M_f(h)\to0$ as $h\downarrow0$.  Choose
$\delta_0\in(0,T]$ so that
\begin{equation}\label{eq:khasminskii-smallness}
 aM_f(\delta_0)\le\delta.
\end{equation}
For $0\le s\le t\le T$ with $t-s\le\delta_0$, conditional monotone
convergence, \eqref{eq:khasminskii-conditional-moments}, and
\eqref{eq:khasminskii-smallness} yield
\begin{align*}
 \E\left[\left.
  \exp\left(a\int_s^t f(r,Z_r)\dd r\right)
  \right|\mathcal F_s\right]
 &=\sum_{n=0}^\infty\frac{a^n}{n!}
   \E\left[\left.
    \left(\int_s^t f(r,Z_r)\dd r\right)^n
    \right|\mathcal F_s\right]                                    \\
 &\le\sum_{n=0}^\infty\bigl(aM_f(t-s)\bigr)^n
 \le\frac1{1-\delta}
 \qquad\text{a.s.}
\end{align*}
Thus the stronger conditional estimate holds.  Taking expectation under
$\Pp_{s,\bm z}$ and then the supremum over $(s,t,\bm z)$ gives
\eqref{eq:khasminskii-unconditional}.
\end{proof}
Now we give the proof of Theorem~\ref{thm:path-ldp}.
\begin{proof}[Proof of Theorem~\ref{thm:path-ldp}]
We first show that the drift contribution is negligible at a
superexponential rate. More precisely, for every $\rho>0$,
\begin{equation}\label{eq:superexp}
 \limsup_{\varepsilon\downarrow0}
 \varepsilon\log
 \sup_{\bm z\in\R^{2d}}
 \Pp_{s,\bm z}\left(
  \int_s^{s+\varepsilon}
  |b(r,X_r,V_r)|\,\dd r>\rho
 \right)
 =-\infty.
\end{equation}

Fix $s\ge0$ and choose $T>s+1$. Let $a>0$ be arbitrary. By
Proposition~\ref{prop:khasminskii}, applied with $f=|b|^2$, there
exists $\delta_a>0$ such that for $0<\varepsilon\le \delta_a\wedge1$,
\begin{equation}\label{eq:khasminskii-b-small}
 \sup_{\bm z\in\R^{2d}}
 \E_{s,\bm z}\exp\left(
  a\int_s^{s+\varepsilon}
  |b(r,X_r,V_r)|^2\,\dd r
 \right)
 \le 2.
\end{equation}
By the Cauchy--Schwarz inequality,
$$
 \left(
  \int_s^{s+\varepsilon}
  |b(r,X_r,V_r)|\,\dd r
 \right)^2
 \le
 \varepsilon
 \int_s^{s+\varepsilon}
 |b(r,X_r,V_r)|^2\,\dd r.
$$
By Chebyshev's inequality and
\eqref{eq:khasminskii-b-small}, we have
\begin{align*}
 &\sup_{\bm z\in\R^{2d}}
 \Pp_{s,\bm z}\left(
  \int_s^{s+\varepsilon}
  |b(r,X_r,V_r)|\,\dd r>\rho
 \right)\\
 &\quad\le
 \sup_{\bm z\in\R^{2d}}
 \Pp_{s,\bm z}\left(
  \int_s^{s+\varepsilon}
  |b(r,X_r,V_r)|^2\,\dd r
  >\frac{\rho^2}{\varepsilon}
 \right)\\
 &\quad\le
 \exp\left(-\frac{a\rho^2}{\varepsilon}\right)
 \sup_{\bm z\in\R^{2d}}
 \E_{s,\bm z}\exp\left(
  a\int_s^{s+\varepsilon}
  |b(r,X_r,V_r)|^2\,\dd r
 \right)\\
 &\quad\le
 2\exp\left(-\frac{a\rho^2}{\varepsilon}\right).
\end{align*}
It follows that
$$
 \limsup_{\varepsilon\downarrow0}
 \varepsilon\log
 \sup_{\bm z\in\R^{2d}}
 \Pp_{s,\bm z}\left(
  \int_s^{s+\varepsilon}
  |b(r,X_r,V_r)|\,\dd r>\rho
 \right)
 \le -a\rho^2.
$$
Since $a>0$ was arbitrary, \eqref{eq:superexp} follows.

We now compare the solution with the free rescaled process.  Define
$$
 G_t^\varepsilon
 :=\left(\sqrt2\int_0^t
       (W_{s+\varepsilon r}-W_s)\dd r,
       \sqrt2(W_{s+\varepsilon t}-W_s)\right).
$$
From \eqref{eq:sde} and \eqref{eq:scaled-process},
$$
 Y_t^{\varepsilon,1}=\int_0^tY_r^{\varepsilon,2}\dd r,
\qquad
 Y_t^{\varepsilon,2}
 =\sqrt2(W_{s+\varepsilon t}-W_s)
 +\varepsilon\int_0^t b(s+\varepsilon r,X_{s+\varepsilon r},V_{s+\varepsilon r})\dd r.
$$
Therefore
\begin{equation}\label{eq:scaled-comparison}
 \norm{Y^\varepsilon-G^\varepsilon}_\infty
 \le 2\int_s^{s+\varepsilon}|b(r,X_r,V_r)|\dd r.
\end{equation}
The estimate \eqref{eq:superexp} gives, for every $\rho>0$,
$$
 \limsup_{\varepsilon\downarrow0}\varepsilon\log
 \Pp_{s,\bm z}\left(\norm{Y^\varepsilon-G^\varepsilon}_\infty>\rho\right)=-\infty.
$$
Thus
$Y^\varepsilon$ and $G^\varepsilon$ are exponentially equivalent at speed
$1/\varepsilon$ in the sense of
\cite[Theorem~4.2.13]{DemboZeitouni1998}.

By Brownian scaling,
$$
 G^\varepsilon
 \stackrel{\mathrm{law}}{=}
 \sqrt{\varepsilon}
 \left(
  \sqrt2\int_0^\cdot B_\tau\,\dd\tau,
  \sqrt2 B
 \right),
$$
where $B$ is a standard Brownian motion. Since the mapping
$$
f\longmapsto
 \left(
  \sqrt2\int_0^\cdot f(\tau)\,\dd\tau,
  \sqrt2 f
 \right)
$$
is continuous from $C_0([0,1];\R^d)$ into
$C([0,1];\R^{2d})$, Schilder's theorem and the contraction
principle imply that $\{G^\varepsilon\}_{\varepsilon>0}$
satisfies an LDP with speed $\varepsilon^{-1}$ and good rate
function $I_1$ defined in \eqref{eq:time-r-action}. Finally,
the exponential equivalence of $Y^\varepsilon$ and
$G^\varepsilon$ yields the same LDP for
$\{Y^\varepsilon\}_{\varepsilon>0}$, completing the proof.
\end{proof}

\section{Proofs of the small-time asymptotics}
\label{sec:small-time}

\begin{proof}[Proof of Theorem~\ref{thm:varadhan}]
By \eqref{eq:time-r-action} and Fubini's theorem, we have
\begin{align*}\phi_1(r)
 =
 \int_0^r\int_0^t\dot\phi_2(\tau)\,\dd\tau\,\dd t
 =
 \int_0^r(r-\tau)\dot\phi_2(\tau)\,\dd\tau,
\quad
 \phi_2(r)=\int_0^r\dot\phi_2(\tau)\,\dd\tau.
\end{align*}
By the contraction principle, for fixed $r\geq 0$, the family
$\{Y_r^\varepsilon\}_{\varepsilon>0}$ satisfies an LDP on
$\R^{2d}$ with speed $\varepsilon^{-1}$ and good rate function
$$
J_r(q)
 :=
 \inf\left\{
 I_r(\phi):
 \phi\in C([0,r];\R^{2d}),\ \phi(0)=0,\ \phi(r)=q
 \right\}, \quad q=(q_1,q_2)\in\R^{2d}.
$$
By the standard minimum-energy formula for controllable linear systems
(see, e.g., \cite[Section~11.5]{Hespanha2018}), the endpoint rate function is given by 
\begin{equation}\label{eq:endpoint-action-computed}
 J_r(q)
 =
 \frac12 q^\top Q_r^{-1}q
 =
 \frac{3}{r^3}|q_1|^2
 -\frac{3}{r^2}\ip{q_1}{q_2}
 +\frac1r|q_2|^2,
\end{equation}
where
$$
 Q_r^{-1}
 =
 \begin{pmatrix}
  \dfrac6{r^3}I_d&-\dfrac3{r^2}I_d\\[2mm]
  -\dfrac3{r^2}I_d&\dfrac2r I_d
 \end{pmatrix}.
$$
In particular,
$$
J_1(q)
 =
 3|q_1|^2
 -3\ip{q_1}{q_2}
 +|q_2|^2.
$$

Fix $\eta\in(0,1)$ and put
$$
 t_\eta=s+(1-\eta)\varepsilon,
 \qquad
 \qquad
 \bm y_\varepsilon
 =\theta_{s,s+\varepsilon}(\bm z)+D_\varepsilon q.
$$
The Chapman--Kolmogorov identity gives
\begin{align}
 p_b(s,\bm z;s+\varepsilon,\bm y_\varepsilon)
 &=\int_{\R^{2d}}
   p_b(s,\bm z;t_\eta,\bm u)
   p_b(t_\eta,\bm u;s+\varepsilon,\bm y_\varepsilon)\dd\bm u \notag\\
 &=\E_{s,\bm z}\!\left[
   p_b\!\left(t_\eta,
   \theta_{s,t_\eta}(\bm z)
      +D_\varepsilon Y_{1-\eta}^\varepsilon;
   s+\varepsilon,\bm y_\varepsilon\right)
   \right],                                            \label{eq:ck-density-transfer}
\end{align}
where the last step is due to the fact that
$(X_{t_\eta},V_{t_\eta})=\theta_{s,t_\eta}(\bm z)+D_\varepsilon Y_{1-\eta}^\varepsilon$.

For $\zeta=(\zeta_1,\zeta_2)\in\R^{2d}$, define
\begin{equation}\label{eq:terminal-cost}
 \Psi_\eta(q,\zeta)
 :=\frac{|q_1-\zeta_1-\eta q_2|^2}{\eta^3}
   +\frac{|q_2-\zeta_2|^2}{\eta}.
\end{equation}
By the definition of $\rho_{\eta\varepsilon}$ in Section 1, we have
\begin{align}\label{eq:last-segment}
 &\rho_{\eta\varepsilon}\!\left(
 \bm y_\varepsilon-
 \theta_{t_\eta,s+\varepsilon}\!\left(
   \theta_{s,t_\eta}(\bm z)+D_\varepsilon\zeta
 \right)\right)\nonumber\\                                                   
 &=\frac{\left|
  \varepsilon(q_1-\zeta_1-\eta\zeta_2)
  -\eta\varepsilon(q_2-\zeta_2)\right|^2}
  {(\eta\varepsilon)^3}
  +\frac{|q_2-\zeta_2|^2}{\eta\varepsilon}                      
 =\frac1\varepsilon\Psi_\eta(q,\zeta).
\end{align}
Combining \eqref{eq:ck-density-transfer}, \eqref{eq:last-segment}, and
\eqref{eq:heat-kernel-bounds}, we obtain
\begin{align}
 &C_0(\eta\varepsilon)^{-2d}
 \E_{s,\bm z}\exp\!\left(
   -\frac{c_0}{\varepsilon}
    \Psi_\eta(q,Y_{1-\eta}^\varepsilon)
 \right)
 \le p_b(s,\bm z;s+\varepsilon,\bm y_\varepsilon) \notag\\
 &\hspace{18mm}\le
 C_1(\eta\varepsilon)^{-2d}
 \E_{s,\bm z}\exp\!\left(
   -\frac{c_1}{\varepsilon}
    \Psi_\eta(q,Y_{1-\eta}^\varepsilon)
 \right).                                               \label{eq:density-laplace-bounds}
\end{align}
For fixed $\eta\in(0,1)$ and $c>0$, define
$$
 F(\zeta):=-c\Psi_\eta(q,\zeta),
 \qquad \zeta\in\R^{2d}.
$$
The continuity and nonnegativity of $\Psi_\eta(q,\cdot)$ imply that $F$ is continuous and bounded above by zero. Since $Y_{1-\eta}^{\varepsilon}$ satisfies a good LDP on $\R^{2d}$ with speed
$\varepsilon^{-1}$ and rate function $J_{1-\eta}$, Varadhan's integral
lemma \cite[Theorem~4.3.1]{DemboZeitouni1998} yields
\begin{align}
 &\lim_{\varepsilon\downarrow0}
 \varepsilon\log
 \E_{s,\bm z}\exp\!\left(
  -\frac{c}{\varepsilon}
   \Psi_\eta(q,Y_{1-\eta}^{\varepsilon})
 \right) \notag\\
 &\qquad=
 \sup_{\zeta\in\R^{2d}}
 \bigl\{F(\zeta)-J_{1-\eta}(\zeta)\bigr\} \notag\\
 &\qquad=
 -\inf_{\zeta\in\R^{2d}}
 \bigl\{J_{1-\eta}(\zeta)+c\Psi_\eta(q,\zeta)\bigr\}.
 \label{eq:terminal-laplace-principle}
\end{align}

We claim that, for every $c>0$,
\begin{equation}\label{eq:penalized-action-limit}
 \lim_{\eta\downarrow0}
 \inf_{\zeta\in\R^{2d}}
 \left\{
  J_{1-\eta}(\zeta)+c\Psi_\eta(q,\zeta)
 \right\}
 =J_1(q).
\end{equation}
Fix $c>0$ and write
$$
 a_\eta
 :=
 \inf_{\zeta\in\R^{2d}}
 \left\{
  J_{1-\eta}(\zeta)+c\Psi_\eta(q,\zeta)
 \right\}.
$$
For the upper bound, consider
$$
 \zeta_\eta=(q_1-\eta q_2,q_2).
$$
As
$$
 q_2-\zeta_{\eta,2}=0,
 \qquad
 q_1-\zeta_{\eta,1}-\eta q_2=0,
$$
we have  $\Psi_\eta(q,\zeta_\eta)=0$. Therefore,
$$
 \inf_{\zeta\in\R^{2d}}
 \left\{
  J_{1-\eta}(\zeta)+c\Psi_\eta(q,\zeta)
 \right\}
 \le J_{1-\eta}(\zeta_\eta).
$$
Since $(1-\eta,\zeta_\eta)\to(1,q)$ as $\eta\downarrow0$, the joint
continuity of $(r,\zeta)\mapsto J_r(\zeta)$ yields
$$
 \limsup_{\eta\downarrow0}a_\eta
 \le J_1(q).
$$
On the other hand, let $\eta_n\downarrow0$ be
arbitrary, and choose $\zeta_n\in\R^{2d}$ such that
$$
 J_{1-\eta_n}(\zeta_n)
 +c\Psi_{\eta_n}(q,\zeta_n)
 \le a_{\eta_n}+\frac1n.
$$
The preceding upper bound shows that $(a_{\eta_n})_{n\ge1}$ is bounded
above. Since $J_{1-\eta_n}\ge0$ and $c>0$, it follows that
$$
 \sup_{n\ge1}\Psi_{\eta_n}(q,\zeta_n)<\infty.
$$
Recalling the definition of $\Psi_\eta$, we obtain
$$
 |q_2-\zeta_{n,2}|=O(\eta_n^{1/2}),
 \qquad
 |q_1-\zeta_{n,1}-\eta_nq_2|
 =O(\eta_n^{3/2}).
$$
Consequently, we have $\zeta_n\to q$.
Combined with the joint continuity of
$J$, we obtain
$$
 J_{1-\eta_n}(\zeta_n)\longrightarrow J_1(q).
$$
Using the defining inequality for $\zeta_n$ and the non-negativity of
the penalty, we conclude that
$$
 a_{\eta_n}
 \ge
 J_{1-\eta_n}(\zeta_n)
 +c\Psi_{\eta_n}(q,\zeta_n)-\frac1n
 \ge
 J_{1-\eta_n}(\zeta_n)-\frac1n.
$$
Hence
$$
 \liminf_{n\to\infty}a_{\eta_n}\ge J_1(q).
$$
Since the sequence $\eta_n\downarrow0$ was arbitrary, this proves the
liminf inequality. Together with the upper bound, it establishes
\eqref{eq:penalized-action-limit}.

Since
$\varepsilon\log\bigl(C_i(\eta\varepsilon)^{-2d}\bigr)\to0$ for fixed
$\eta$, equations \eqref{eq:density-laplace-bounds} and
\eqref{eq:terminal-laplace-principle} imply
$$
\begin{aligned}
 -\inf_{\zeta\in\R^{2d}}
  \bigl\{J_{1-\eta}(\zeta)+c_0\Psi_\eta(q,\zeta)\bigr\}
 &\le \liminf_{\varepsilon\downarrow0}
  \varepsilon\log p_b(s,\bm z;s+\varepsilon,\bm y_\varepsilon)\\
 &\le \limsup_{\varepsilon\downarrow0}
  \varepsilon\log p_b(s,\bm z;s+\varepsilon,\bm y_\varepsilon)\\
 &\le -\inf_{\zeta\in\R^{2d}}
  \bigl\{J_{1-\eta}(\zeta)+c_1\Psi_\eta(q,\zeta)\bigr\}.
\end{aligned}
$$
Letting $\eta\downarrow0$ and invoking
\eqref{eq:penalized-action-limit} gives
$$
 \lim_{\varepsilon\downarrow0}
 \varepsilon\log p_b(s,\bm z;s+\varepsilon,\bm y_\varepsilon)=-J_1(q).
$$
This proves the theorem.
\end{proof}

\begin{proof}[Proof of Theorem~\ref{thm:fixed-endpoint}]
Throughout the proof, $s\ge0$ and $\bm z=(x,v)$ are fixed. For $t\in[0,1]$,
define
\begin{align*}
 \widehat Y_t^\varepsilon
 &:=
 \left(X_{s+\varepsilon t}-x-\varepsilon t v,\
       \varepsilon(V_{s+\varepsilon t}-v)\right),\\
 \widehat G_t^\varepsilon
 &:=
 \left(\sqrt2\,\varepsilon\int_0^t
       (W_{s+\varepsilon r}-W_s)\,\dd r,\
       \sqrt2\,\varepsilon(W_{s+\varepsilon t}-W_s)\right).
\end{align*}
Indeed, \eqref{eq:scaled-comparison} implies
$$
 \norm{\widehat Y^\varepsilon-\widehat G^\varepsilon}_\infty
 \le
 2\varepsilon\int_s^{s+\varepsilon}
 |b(r,X_r,V_r)|\,\dd r.
$$
Fix $a>0$. By Proposition~\ref{prop:khasminskii}, we have, for all sufficiently small $\varepsilon$,
$$
 \E_{s,\bm z}\exp\!\left(
 a\int_s^{s+\varepsilon}|b(r,X_r,V_r)|^2\,\dd r
 \right)\le2.
$$
Moreover, by the Cauchy--Schwarz inequality,
$$
 \left(
 \int_s^{s+\varepsilon}|b(r,X_r,V_r)|\,\dd r
 \right)^2
 \le
 \varepsilon\int_s^{s+\varepsilon}|b(r,X_r,V_r)|^2\,\dd r.
$$
Consequently,
\begin{align*}
 &\Pp_{s,\bm z}\!\left(
   \norm{\widehat Y^\varepsilon-\widehat G^\varepsilon}_\infty>\rho
 \right)\\
 &\qquad\le
 \Pp_{s,\bm z}\!\left(
   \int_s^{s+\varepsilon}|b(r,X_r,V_r)|^2\,\dd r
   >\frac{\rho^2}{4\varepsilon^3}
 \right)
 \le
 2\exp\!\left(-\frac{a\rho^2}{4\varepsilon^3}\right).
\end{align*}
Since $a>0$ is arbitrary, it follows that, for every $\rho>0$,
$$
 \limsup_{\varepsilon\downarrow0}\varepsilon^3
 \log\Pp_{s,\bm z}\!\left(
   \norm{\widehat Y^\varepsilon-\widehat G^\varepsilon}_\infty>\rho
 \right)
 =-\infty.
$$
Hence $\widehat Y^\varepsilon$ and $\widehat G^\varepsilon$ are
exponentially equivalent at speed $\varepsilon^{-3}$.

By the Brownian scaling property, we have
$$
 \widehat G^\varepsilon
 \stackrel{\mathrm{law}}{=}
 \varepsilon^{3/2}
 \left(\sqrt2\int_0^\cdot B_r\,\dd r,\sqrt2 B\right).
$$
Similarly to the proof of Theorem~\ref{thm:path-ldp}, Schilder's theorem and the
contraction principle therefore imply that
$\{\widehat G^\varepsilon\}_{\varepsilon>0}$ satisfies an LDP on
$C([0,1];\R^{2d})$ with speed $\varepsilon^{-3}$ and good rate function $I_1$ defined in \eqref{eq:time-r-action}. The exponential equivalence of $\widehat Y^\varepsilon$ and $\widehat G^\varepsilon$ implies that
$\{\widehat Y_r^\varepsilon\}_{\varepsilon>0}$ satisfies an LDP on $\R^{2d}$ with speed $\varepsilon^{-3}$ and rate function $J_r$ given by
\eqref{eq:endpoint-action-computed}.

Fix $\eta\in(0,1)$ and set
$$
 t_\eta:=s+(1-\eta)\varepsilon,\qquad
 \bm y_\varepsilon:=\theta_{s,s+\varepsilon}(\bm z)+q,\qquad
 \bar q:=(q_1,0).
$$
For $\zeta=(\zeta_1,\zeta_2)\in\R^{2d}$, define
$$
 \bm u_\varepsilon(\zeta)
 :=
 \theta_{s,t_\eta}(\bm z)
 +\left(\zeta_1,\frac{\zeta_2}{\varepsilon}\right).
$$
Thus, we have
\begin{align}\label{eq:transform}
 (X_{t_\eta},V_{t_\eta})
 =\bm u_\varepsilon\bigl(\widehat Y_{1-\eta}^\varepsilon\bigr).
\end{align}

With $\Psi_\eta$ as in \eqref{eq:terminal-cost}, a direct computation gives
\begin{align}
 &\rho_{\eta\varepsilon}\!\left(
 \bm y_\varepsilon-
 \theta_{t_\eta,s+\varepsilon}\bigl(\bm u_\varepsilon(\zeta)\bigr)
 \right)
 =
 \frac1{\varepsilon^3}
 \left\{
 \frac{|q_1-\zeta_1-\eta\varepsilon q_2|^2}{\eta^3}
 +\frac{|\varepsilon q_2-\zeta_2|^2}{\eta}
 \right\}\notag\\
 &\hspace{44mm}=
 \frac1{\varepsilon^3}
 \Psi_\eta\bigl(\bar q,\zeta-h_{\eta,\varepsilon}\bigr),
 \label{eq:fixed-endpoint-translation}
\end{align}
where
$$
 h_{\eta,\varepsilon}:=
 (-\eta\varepsilon q_2,\varepsilon q_2)\in\R^{2d}.
$$
 In the same form as
\eqref{eq:ck-density-transfer}, by the Chapman--Kolmogorov identity and \eqref{eq:transform}, we have
$$
 p_b(s,\bm z;s+\varepsilon,\bm y_\varepsilon)
 =
 \E_{s,\bm z}\!\left[
 p_b\!\left(
 t_\eta,
 \bm u_\varepsilon\bigl(\widehat Y_{1-\eta}^\varepsilon\bigr);
 s+\varepsilon,\bm y_\varepsilon
 \right)\right].
$$
By \eqref{eq:heat-kernel-bounds} and \eqref{eq:fixed-endpoint-translation}, we obtain that
\begin{equation}\label{eq:fixed-endpoint-density-bounds}
 C_0(\eta\varepsilon)^{-2d}\Lambda_\varepsilon(c_0)
 \le
 p_b(s,\bm z;s+\varepsilon,\bm y_\varepsilon)
 \le
 C_1(\eta\varepsilon)^{-2d}\Lambda_\varepsilon(c_1),
\end{equation}
where
$$
 \Lambda_\varepsilon(c)
 :=
 \E_{s,\bm z}\exp\!\left[
 -\frac{c}{\varepsilon^3}
 \Psi_\eta\!\left(
   \bar q,\widehat Y_{1-\eta}^\varepsilon-h_{\eta,\varepsilon}
 \right)\right].
$$
For each fixed $\eta\in(0,1)$, set
$$
 \widetilde Y_{\eta}^\varepsilon
 :=
 \widehat Y_{1-\eta}^\varepsilon-h_{\eta,\varepsilon}.
$$
Since
$$
 \bigl|
 \widetilde Y_{\eta}^\varepsilon
 -\widehat Y_{1-\eta}^\varepsilon
 \bigr|
 =|h_{\eta,\varepsilon}|
 \longrightarrow0,
$$
for every $\rho>0$ one has
$$
 \Pp_{s,\bm z}\!\left(
 \bigl|
 \widetilde Y_{\eta}^\varepsilon
 -\widehat Y_{1-\eta}^\varepsilon
 \bigr|>\rho
 \right)=0
$$
for all sufficiently small $\varepsilon$. Thus
$\{\widetilde Y_{\eta}^\varepsilon\}_{\varepsilon>0}$ and
$\{\widehat Y_{1-\eta}^\varepsilon\}_{\varepsilon>0}$ are exponentially
equivalent at speed $\varepsilon^{-3}$, and the former satisfies the
same LDP with rate function $J_{1-\eta}$. Since
$-c\Psi_\eta(\bar q,\cdot)$ is continuous and bounded above, Varadhan's
integral lemma yields, for every $c>0$,
\begin{equation}\label{eq:fixed-endpoint-laplace}
 \lim_{\varepsilon\downarrow0}
 \varepsilon^3\log\Lambda_\varepsilon(c)
 =
 -a_{\eta,c},
 \qquad
 a_{\eta,c}
 :=
 \inf_{\zeta\in\R^{2d}}
 \left\{
 J_{1-\eta}(\zeta)+c\Psi_\eta(\bar q,\zeta)
 \right\}.
\end{equation}

The variational limit required in
\eqref{eq:fixed-endpoint-laplace} has already been established in \eqref{eq:penalized-action-limit}. For every $c>0$, we have
\begin{equation}\label{eq:fixed-endpoint-variational-limit}
 \lim_{\eta\downarrow0}a_{\eta,c}
 =J_1(\bar q)=3|q_1|^2.
\end{equation}
Combining \eqref{eq:fixed-endpoint-density-bounds} with
\eqref{eq:fixed-endpoint-laplace} therefore yields
$$
 -a_{\eta,c_0}
 \le
 \liminf_{\varepsilon\downarrow0}\varepsilon^3
 \log p_b(s,\bm z;s+\varepsilon,\bm y_\varepsilon)
 \le
 \limsup_{\varepsilon\downarrow0}\varepsilon^3
 \log p_b(s,\bm z;s+\varepsilon,\bm y_\varepsilon)
 \le
 -a_{\eta,c_1}.
$$
Letting $\eta\downarrow0$ and invoking
\eqref{eq:fixed-endpoint-variational-limit}, we obtain that
$$
 \lim_{\varepsilon\downarrow0}\varepsilon^3\log
 p_b\!\left(s,\bm z;s+\varepsilon,
 \theta_{s,s+\varepsilon}(\bm z)+q\right)
 =-J_1(\bar q)=-3|q_1|^2,
$$
which proves \eqref{eq:fixed-endpoint}.

If $q_1=0$, then $D_\varepsilon(0,q_2)=(0,q_2)$. Hence
Theorem~\ref{thm:varadhan}, applied with the fixed vector $(0,q_2)$,
implies
$$
 \lim_{\varepsilon\downarrow0}\varepsilon\log
 p_b\!\left(s,\bm z;s+\varepsilon,
 \theta_{s,s+\varepsilon}(\bm z)+(0,q_2)\right)
 =-J_1(0,q_2)=-|q_2|^2,
$$
which proves \eqref{eq:pure-velocity-endpoint}.
\end{proof}

\vskip 0.3cm
\noindent{\large\bf Acknowledgements}

\begingroup
\emergencystretch=2em
This work is partially supported by the National Key R\&D Program of China (No. 2022YFA1006001) and the National Natural Science Foundation of China (Nos. 12131019, 12571158, 12371151). 
\par\endgroup

\end{document}